\DeclareMathOperator{\diag}{diag}
\newtheorem{theorem}{Theorem}
\newtheorem{example}{Example}
\title{On the Periodicity of Singular Vectors and the Holomorphic Block-Circulant SVD on the Unit Circumference}
\author{Giovanni Barbarino\\\footnotesize Mathematics and Operational Research Unit, Facult\'e polytechnique,\\ \footnotesize  Universit\'e de Mons, Belgium (giovanni.barbarino@umons.ac.be)}
\date{}
\begin{document}


\maketitle

\begin{abstract}
We investigate the singular value decomposition of a rectangular matrix that is analytic on the complex unit circumference, which occurs, e.g., with the matrix of transfer functions representing a broadband multiple-input multiple-output channel. Our analysis is based on the Puiseux series expansion of the eigenvalue decomposition of analytic para-Hermitian matrices on the complex unit circumference. We study the case in which the rectangular matrix does not admit a full analytic singular value factorization, either due to partly multiplexed systems or to sign ambiguity. We show how to find an SVD factorization in the ring of Puiseux series where each singular value and the associated singular vectors present the same period and multiplexing structure, and we prove that it is always possible to find an analytic pseudo-circulant factorization, meaning that any arbitrary arrangements of multiplexed systems can be converted into a parallel form. In particular, one can show that the sign ambiguity can be overcome by allowing non-real holomorphic singular values.
\end{abstract}

\section{Introduction}

Since its introduction \cite{1} the singular value decomposition (SVD) has been extensively used for a wide range of signal processing problems \cite{3,5,4}, from optimal precoder and equaliser design for multiple-input multiple-output (MIMO) channel matrices of complex gain factors \cite{6} to broadband problems whose channel matrix is determined by impulse responses \cite{8}. In the last case, though, the SVD ignores time or frequency correlations in the signals leading to loss of spectral coherence \cite{7}.



The focus has therefore shifted to the computation of the SVD for matrices with entries in functional rings and fields, in the hope to retain more information and regularity from the signals without having to process each time step or frequency separately. In this context, several algorithms for a polynomial SVD (PSVD) of possibly rectangular matrices have been recently developed, performing an approximate factorisation into Laurent-polynomial matrices. 
The use of PSVD algorithms extends from generic problems \cite{12,21}, finding application in various practical scenarios such as MIMO communications \cite{26,23,27,24,22,25}, the equalisation of filter bank-based multi-carrier systems \cite{28,29}, or broadband beamforming \cite{30}.


Despite the fact that the algorithms in \cite{15,12,16,13,14} are proven to converge to a diagonal matrix, it has not been until recently \cite{WPBP23} that questions about existence and uniqueness of such a decomposition have been raised to understand what these algorithms are actually converging to.
The purpose of this paper is to continue the investigation about the existence of the analytic SVD of a (possibly rectangular) matrix, and study how changing the constraints on the wanted factorization may lead to more regular decompositions.
The main contribution, in a signal processing context, is that any system can be brought by paraunitary
operations into a block-diagonal form containing pseudo-circulant matrices. This is quite profound:
it means that any arbitrary arrangement (concatenations, nesting, etc.) of multiplexed systems
can be converted and decoupled into a parallel form.

Consider a (possibly rectangular) matrix ${\bf A}(z)$ matrix that is holomorphic on the unit circumference $S^1$ . It can be equivalently represented by the Laurent time series with coefficients ${\bf A}[n]\laplace {\bf A}(z)$ or by the analytic and $2\pi$ periodic matrix on the real line ${\bf A}(\Omega):= {\bf A}(e^{j\Omega})$.
When $L$ is a positive integer, then with ${\bf B}(z^{1/L})$ we denote a matrix whose entries can be written as Laurent series in the variable $z^{1/L}$, called \textit{Puiseux series with index} $L$, that converges on an annulus around $S^1$, and can be equivalently represented by the analytic and $2\pi L$ periodic matrix on the real line ${\bf B}(\Omega):= {\bf B}(e^{j\Omega/L})$.
See \cite[Section 2.1]{BV22} for a more detailed discussion on the topic. From now on, any matrix, vector or scalar function depending on some complex variable $z$ or $z^{1/L}$ is intended to be analytic with respect to the relative variable at least on $S^1$.

In \cite{WPBP23} and \cite{BV22} it has been proven that any rectangular matrix ${\bf A}(z)$ admits an SVD 
\begin{equation}\label{eq:Puiseux_SVD}
    {\bf A}(z) = {\bf U}(z^{1/L}){\bf \Sigma}(z^{1/L}){\bf V}^P(z^{1/L})
\end{equation}
for some positive integer $L$, where ${\bf \Sigma}(z^{1/L})$ is a diagonal matrix and ${\bf U}(z^{1/L})$, ${\bf V}(z^{1/L})$ are para-unitary matrices, i.e. ${\bf U}(z) {\bf U}^P(z) = {\bf I}$ with the para-Hermitian operator $\{\cdot\}^P$ performing a time reversal and complex conjugation ${\bf U}^P(z) := [{\bf U}(1/\overline z)]^H$. 
Moreover, the diagonal entries of ${\bf \Sigma}(z^{1/L})$ are real for any $z\in S^1$, and thus their absolute values are exactly the singular values of ${\bf A}(z)$. The difference with a traditional SVD is that we allow for the singular values to be also negative, which is a necessary requisite in order to retain the analyticity of the factorization.

The parameter $L$ comes from the presence of multiplexed singular values and (an odd number of) zeros of the singular values $\sigma_i(z^{1/L})$ in ${\bf \Sigma}(z^{1/L})$. In fact, the singular values can be partitioned into orbits such that $k$-multiplexed singular values are in orbits of cardinality $k$, and  
\begin{align}
 \nonumber   &\sigma_{i+1}(\Omega) \equiv \sigma_i(\Omega+2\pi), \qquad  \forall i=1,\dots,k-1,\\
    &|\sigma_{1}(\Omega)| \equiv |\sigma_{k}(\Omega+2\pi)| \equiv |\sigma_{1}(\Omega+2\pi k)|, 
\end{align}
where $\sigma_i(\Omega) := \sigma_i(e^{j\Omega/L })$ and $|\sigma_{i}(\Omega)| \not \equiv |\sigma_{j}(\Omega)|$ for every $i\ne j$.
In case $\sigma_{1}(\Omega) \equiv \sigma_{k}(\Omega+2\pi)$, we say that the singular values $\sigma_i$ in this orbit are $k$-multiplexed and they are analytic in $z^{1/k}$. If instead $\sigma_{1}(\Omega) \equiv -\sigma_{k}(\Omega+2\pi)$  we say that the singular values are \textit{signed} $k$-multiplexed and in this case they are analytic in $z^{1/2k}$. 

As a way to get rid of the multiplexed index and return to classical holomorphic functions, it has been shown in  \cite{WPPC18} and \cite{BV22} that any square para-Hermitian matrix ${\bf A}(z) = {\bf A}^P(z)$ admits a decomposition
$${\bf A}(z) = {\bf U}(z){\bf C}(z){\bf U}^P(z),$$
where ${\bf U}(z)$ is para-unitary and ${\bf C}(z)$ is block diagonal and each block is pseudo-circulant, i.e. for each block of size $N$ there exist some functions $\phi_0(z),\dots, \phi_{N-1}(z)$ such that the block has the form	\[
	\begin{bmatrix}
	\phi_0(z) &z^{-1}\phi_{N-1}(z) &\dots & z^{-1}\phi_{1}(z)\\
	\phi_1(z) & \phi_0(z)& \ddots & \vdots\\
	\vdots& \ddots & \ddots& z^{-1}\phi_{N-1}(z)\\
	\phi_{N-1}(z)&\dots &\phi_1(z)  &\phi_0(z)
	\end{bmatrix}
	\]
and each block coincide with a different multiplexed orbit.
With respect to the above pseudo-circulant eigendecomposition for para-Hermitian matrices, whose blocks are determined by the multiplexing of the eigenvalues, when dealing with singular values we have also to be wary of possibly signed multiplexed orbits, that are less regular than the not signed ones. 

Here we show the existence of different "singular values" decompositions for any (possibly rectangular) matrix ${\bf A}(z)$, where we relax some constraints on the structure of the SVD in order to gain more regularity. 

First of all, we refine the decomposition in \cite{BV22,WPBP23} and find an analytic SVD 
$$\textbf{A}(e^{j\Omega}) = \textbf{U}(\Omega)
    {\bf \Sigma}(\Omega) 
    \textbf{V}(\Omega)^H,$$
where the diagonal matrix ${\bf \Sigma}(\Omega)$ may have negative entries and contains the singular values of ${\bf A}(\Omega)$ up to the sign, and for each nonzero singular value $\sigma(\Omega)$ that is (signed) $k$-multiplexed, the associated left and right singular vectors sports the same (signed) multiplexed behaviour. 

Then we allow for complex singular values in order to get rid of the signed multiplexed orbits, i.e. we find an analytic SVD
$$\textbf{A}(e^{j\Omega}) = \textbf{U}(\Omega)
    {\bf S}(\Omega) 
    \textbf{V}(\Omega)^H,$$
where  ${\bf S}(\Omega)$ is a diagonal and complex matrix, with diagonal entries $s_i(\Omega)$ whose absolute values correspond with the singular values of $\textbf{A}(e^{j\Omega})$. The functions $s_i(\Omega)$ are still multiplexed, but they are no more signed multiplexed.

Leveraging on the latest decomposition, we then prove that, even in the rectangular case, any ${\bf A}(z)$ admits a pseudo-circulant decomposition of the form 
$${\bf A}(z) = {\bf U}(z){\bf C}(z){\bf V}^P(z),$$
 where ${\bf C}(z)$ is block diagonal and each block is pseudo-circulant.

\section{Periodicity of Singular Vectors}

Let ${\bf A}$ be a (possibly rectangular) matrix with distinct non-zero singular values $\sigma_1,\dots,\sigma_p$, where each singular value has multiplicity $q_i$. In the SVD ${\bf A} = {\bf U}{\bf \Sigma}{\bf V}^H$, the matrix ${\bf \Sigma}$ is diagonal, real, nonnegative and contains the singular values $\sigma_i$ with the respective multiplicities on the diagonal, plus possibly some zeros. The factorization can be equivalently be written in dyadic notation as
\begin{equation}\label{eq:dyadic_svd}
   {\bf A} = \sum_{i=1}^p \sigma_i{\bf U}_i{\bf V}_i^H ,
\end{equation}
where ${\bf U}_i$ are the $q_i$ columns of ${\bf U}$ containing the left singular vectors relative to $\sigma_i$ and ${\bf V}_i$ are the $q_i$ columns of ${\bf V}$ containing the right singular vectors relative to $\sigma_i$. Even if the SVD is in general not unique, a known result is that each factor ${\bf A}_i:= {\bf U}_i{\bf V}_i^H$ in \eqref{eq:dyadic_svd} is uniquely determined by $\sigma_i$. 
Moreover, suppose the singular values in ${\bf \Sigma}$ are sorted so that repeated singular values are consecutive on its diagonal (not necessarily in some order). If now we take the two EVDs ${\bf A}{\bf A}^H={\bf Q}_1{\bf D}_1{\bf Q}_1^H$ and ${\bf A}^H{\bf A}={\bf Q}_2{\bf D}_2{\bf Q}_2^H$ where ${\bf D}_1$, ${\bf D}_2$ are real, diagonal, nonnegative and present the same nonzero eigenvalues $\lambda_i=\sigma_i^2$ on the same positions as the relative singular values appear in ${\bf \Sigma}$, then 
\begin{equation}\label{eq:SVD_from_EVD}
  {\bf A} = {\bf Q}_1{\bf \Sigma}{\bf \Psi}{\bf Q}_2^H,
\end{equation}
where ${\bf \Psi}$ is a square, block diagonal and unitary matrix, each block relative to a repeated singular value, with size equal to its multiplicity. 

When dealing with analytic matrices ${\bf A}(t)$ depending on on a real variable $t$, we already know that there exists a 'SVD' decomposition:
\begin{theorem}[Analytic SVD on a real interval, \cite{BBMN91}]\label{Th:Analytic_SVD_real}
For an $M\times N$ matrix ${\bf A}(t)$ that is analytic on some interval of $\mathbb R$, a decomposition 
\begin{equation}\label{eq:real_analytic_SVD}
    {\bf A}(t) = {\bf U}(t){\bf \Sigma}(t){\bf V}(t)^H
\end{equation}
exists with analytic unitary ${\bf U} (t)$ and ${\bf V}(t)$, and analytic, diagonal, real-valued ${\bf \Sigma}(t)$. 
\end{theorem}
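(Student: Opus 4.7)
The plan is to reduce to Rellich's classical theorem on the analytic eigendecomposition of a real-analytic Hermitian matrix. Apply Rellich to ${\bf B}(t):={\bf A}(t){\bf A}(t)^H$, which is Hermitian and real-analytic on the interval. This yields an analytic unitary ${\bf Q}_1(t)$ with columns ${\bf u}_i(t)$ and an analytic diagonal ${\bf D}_1(t)=\diag(\lambda_1(t),\dots,\lambda_M(t))$ with $\lambda_i(t)\ge 0$, so that ${\bf B}={\bf Q}_1{\bf D}_1{\bf Q}_1^H$. I would set ${\bf U}(t):={\bf Q}_1(t)$ and, for each $i$, extract a real-analytic \emph{signed} square root $\sigma_i$ of $\lambda_i$: either $\lambda_i\equiv 0$ and I set $\sigma_i\equiv 0$, or the zeros of $\lambda_i$ are isolated of necessarily even order, so I factor $\lambda_i(t)=p_i(t)^2 h_i(t)$ with $h_i(t)>0$ and $p_i$ real-analytic, and set $\sigma_i(t):=p_i(t)\sqrt{h_i(t)}$.

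For each index $i$ with $\sigma_i\not\equiv 0$, define ${\bf v}_i(t):={\bf A}(t)^H{\bf u}_i(t)/\sigma_i(t)$. The identity $\|{\bf A}^H{\bf u}_i\|^2={\bf u}_i^H{\bf A}{\bf A}^H{\bf u}_i=\lambda_i=\sigma_i^2$ forces every component of ${\bf A}^H{\bf u}_i$ to vanish to at least the order of $\sigma_i$ at each zero, so ${\bf v}_i$ extends analytically across those zeros. Pairwise orthonormality of the ${\bf v}_i$ is automatic from the computation ${\bf v}_j^H{\bf v}_i={\bf u}_j^H{\bf A}{\bf A}^H{\bf u}_i/(\sigma_j\sigma_i)=\lambda_i\delta_{ij}/(\sigma_i\sigma_j)$ together with the unitarity of ${\bf Q}_1$.

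After reordering the eigenvalues so that the $r\le N$ indices with $\sigma_i\not\equiv 0$ come first, one has analytic orthonormal columns ${\bf v}_1,\dots,{\bf v}_r$ that span the image of ${\bf A}(t)^H$. To complete them to an analytic unitary ${\bf V}(t)$, I would consider the analytic Hermitian projector ${\bf P}(t):={\bf I}_N-\sum_{i=1}^{r}{\bf v}_i(t){\bf v}_i(t)^H$: it has constant rank $N-r$ and its eigenvalues are identically $0$ or $1$ (an analytic function taking only the values $0,1$ must be constant), so a second application of Rellich to ${\bf P}(t)$ provides analytic unit eigenvectors ${\bf v}_{r+1},\dots,{\bf v}_N$ corresponding to eigenvalue $1$. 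Placing $\sigma_1,\dots,\sigma_r,0,\dots,0$ on the diagonal of an $M\times N$ matrix ${\bf \Sigma}(t)$, the identity ${\bf A}{\bf v}_j=\sigma_j{\bf u}_j$ holds for $j\le r$ by construction and for $j>r$ since such ${\bf v}_j$ lie in the orthogonal complement of the range of ${\bf A}^H$, i.e.\ in $\ker{\bf A}(t)$.

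The main obstacle is precisely the existence of the analytic \emph{signed} square root, and this is why the theorem must allow negative entries in ${\bf \Sigma}(t)$: the unsigned $\sqrt{\lambda_i(t)}$ is only continuous at zeros of odd multiplicity of the analytic factor $p_i$, whereas $p_i(t)\sqrt{h_i(t)}$ is analytic but may change sign. A secondary technical point is handling the simultaneous vanishing of different $\sigma_i$, but the Rellich-based construction handles each index independently, so crossings of the eigenvalue functions cause no trouble in either the analytic extension of ${\bf v}_i$ or the completion step.
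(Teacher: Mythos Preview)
The paper does not give its own proof of this statement: Theorem~\ref{Th:Analytic_SVD_real} is quoted from \cite{BBMN91} as a known result and used as a black box. Your argument --- apply Rellich to ${\bf A}{\bf A}^H$, take analytic signed square roots of the eigenvalues, transport eigenvectors via ${\bf v}_i={\bf A}^H{\bf u}_i/\sigma_i$, and complete to a unitary ${\bf V}$ by a second application of Rellich to the complementary projector --- is essentially the standard route and is sound.

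Two small technical points are worth tightening. First, the global factorisation $\lambda_i=p_i^2h_i$ with $p_i$ real-analytic and $h_i>0$ is not immediate on an interval that may contain infinitely many zeros of $\lambda_i$; it is cleaner to argue that a signed analytic square root exists locally (because $\lambda_i\ge 0$ forces even-order zeros) and then extends by analytic continuation along the simply connected interval, well-defined up to a global sign. Second, the assertion that for $j>r$ the vector ${\bf v}_j$ lies in $\ker{\bf A}(t)$ is literally correct only at points where $\rk{\bf A}(t)=r$; at the isolated points where the rank drops, one should note instead that ${\bf A}{\bf v}_j$ is analytic and vanishes on the dense set of generic points, hence everywhere. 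With these clarifications your proof is complete.
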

The decomposition \eqref{eq:real_analytic_SVD} is not exactly a SVD since the diagonal elements of ${\bf \Sigma}(t)$ are permitted to be negative real numbers, but their absolute values coincide with the singular values of ${\bf A}(t)$. The negativity of the singular values, though, is an essential hypothesis to guarantee the analiticity of the decomposition, so from now on we call \eqref{eq:real_analytic_SVD} the analytic SVD of ${\bf A}(t)$. The necessity of this hypothesis is shown by the following example. 
\begin{example}
    Consider the $1\times 1$ matrix ${\bf A}(t) = 2\cos(t/2)e^{jt/2}$. A unitary $1\times 1$ matrix is just a unit norm complex number so in \eqref{eq:real_analytic_SVD} we can call $\gamma(t) := {\bf U}(t){\bf V}(t)^H$, $|\gamma(t)| = 1$, $\sigma(t):={\bf \Sigma}(t)\in\mathbb R$ and find that $2\cos(t)e^{jt/2} = \gamma(t)\sigma(t)$ from which $|2\cos(t/2)| = |\sigma(t)| = \pm \sigma(t)$ for every $t$. Since $\gamma(t)$ must be analytic, the only two possible factorization are $\sigma(t/2)\equiv 2\cos(t), \gamma(t)\equiv e^{jt/2}$ or  $\sigma(t)\equiv -2\cos(t), \gamma(t)\equiv -e^{jt/2}$ and in both cases $\sigma(t)$ is negative on some real multi-interval. \hfill \flushright{$\triangle$}
\end{example}
Consider now the matrix ${\bf A}(z)$ and its $2\pi$ periodic counterpart ${\bf A}(\Omega) ={\bf A}(e^{j\Omega})$. From \eqref{eq:real_analytic_SVD} there exists an analytic SVD ${\bf A}(\Omega) = {\bf U}(\Omega){\bf \Sigma}(\Omega){\bf V}(\Omega)^H$, but we do not have any prior information about the periodicity of the factors (or if they are periodic at all). From \cite{WPBP23} it has been shown that such a decomposition exists where all the factors are $2\pi L$ periodic for some positive integer $L$, so that one can reconstruct the decomposition ${\bf A}(z) = {\bf U}(z^{1/L}){\bf \Sigma}(z^{1/L}){\bf V}^P(z^{1/L})$ in Puiseux series of index $L$. In the following we study the periodicity of each singular value and vector, and we show that when the period of $\sigma_i(\Omega)$ is less than $2\pi L$ then its left and right singular vectors can present the same period of $\sigma_i(\Omega)$ or even half of it. 

\subsection{Multiplexing and Sign Changes}

The necessity of using the Puiseux series to find a SVD as in  \eqref{eq:Puiseux_SVD} is due to the possibility of the singular values being multiplexed or having a sign change. Recall that the sign problem is already present in the analytic SVD \eqref{eq:real_analytic_SVD} and the matrix in Example 1 already exhibits this issue:
\begin{example}
    Take ${\bf A}(z) = 1 + z$, whose associated matrix ${\bf A}(\Omega) = 1 + e^{j\Omega} = 2\cos(\Omega/2)e^{j\Omega/2}$ is the same as Example 1. Its only possible singular value, up to the sign, is thus $\sigma(z^{1/2}) = 2\cos(\Omega/2) = z^{j\Omega/2}+z^{-j\Omega/2}$.  \hfill \flushright{$\triangle$}  
\end{example}
Given a matrix ${\bf A}(z)$ and the analytic SVD ${\bf A}(\Omega) = {\bf U}(\Omega){\bf \Sigma}(\Omega){\bf V}(\Omega)^H$, we can always suppose that all the analytic singular values $\sigma_i(\Omega)$ are nonnegative in a small enough right interval of zero, i.e. $[0,\epsilon]$, up to a constant $\pm 1$ diagonal matrix that can be multiplied to ${\bf U}(\Omega)$ and ${\bf \Sigma}(\Omega)$. In particular, this choice assures us that if $\sigma_i(\Omega)\not\equiv\sigma_j(\Omega)$, then $|\sigma_i(\Omega)|\not\equiv|\sigma_j(\Omega)|$, because otherwise they would coincide on the interval $[0,\epsilon]$. Call then $\sigma_1(\Omega),\dots,\sigma_p(\Omega)$ the distinct nonzero singular values of ${\bf A}(\Omega)$ and, if it exists, its zero singular value $\sigma_0(\Omega)\equiv 0$. 

Let now $\Omega_0$ be a point such that all distinct singular values $\sigma_i(\Omega)$ have different absolute value when evaluated on $\Omega_0$. Notice that due to the analiticity of the singular values, $\Omega_0$ can be almost every real number, since the set of values $\Omega$ for which two distinct singular values take the same absolute value is discrete. 

Notice that the absolute values of the diagonal entries of ${\bf \Sigma}(\Omega)$ are always the classical singular values of ${\bf A}(\Omega)$, and since it is $2\pi$ periodic, then ${\bf A}(\Omega_0) = {\bf A}(\Omega_0+2\pi)$ so ${\bf \Sigma}(\Omega_0)$ and ${\bf \Sigma}(\Omega_0+2\pi)$ must contain the same diagonal entries up to signs and permutations.  In particular, this shows that 
\begin{itemize}
    \item the number of zero diagonal entries of ${\bf \Sigma}(\Omega_0)$ and ${\bf \Sigma}(\Omega_0+2\pi)$ is the same and equals the number of zero singular values of ${\bf A}(\Omega)$,
    \item if $|\sigma_i(\Omega_0+2\pi)| \ne |\sigma_i(\Omega_0)|$ then there exists $j\ne i$ such that $|\sigma_j(\Omega_0)| = |\sigma_i(\Omega_0+2\pi)|$ and they have the same multiplicity $q_i=q_j$,
    \item there exists a unique permutation $\tau$ of the indexes of the nonzero singular values such that $|\sigma_{\tau(i)}(\Omega_0)| = |\sigma_i(\Omega_0+2\pi)|$ for all $i=1,\dots,p$,
    \item since $\Omega_0$ can be almost every real number, and all $|\sigma_i(\Omega)|$ are at least continuous, then $|\sigma_{\tau(i)}(\Omega)| \equiv |\sigma_i(\Omega+2\pi)|$. 
\end{itemize}
Up to a renaming of the indexes, we can always suppose $\tau =(1,\dots,k_1)(k_1+1,\dots,k_1 + k_2)\dots(p-k_R+1\dots,p)$, meaning that the permutation $\tau$ partitions the $p$ distinct singular values into $R$ sets, or orbits, indexed by $\nu=1,\dots,R$, each of cardinality $k_\nu$, and cyclically shifts them. Moreover, up to changing the global sign of some of the singular values, we can drop almost all the absolute values in the above relation, except the one that links the last element of each cycle to the first. For example, looking at the first set, we find that 
\begin{align}\label{eq:first_set_sv_shift}
 \nonumber   &\sigma_{i+1}(\Omega) \equiv \sigma_i(\Omega+2\pi), \qquad  \forall i=1,\dots,k_1-1,\\
    &|\sigma_{1}(\Omega)| \equiv |\sigma_{k_1}(\Omega+2\pi)| \equiv |\sigma_{1}(\Omega+2\pi k_1)| ,
\end{align}
and since $\sigma_1(\Omega)$ is analytic and nonzero, then there exists $\kappa_1\in \{1,2\}$ such that 
$$
    \sigma_{1}(\Omega) \equiv (-1)^{\kappa_1-1} \sigma_{1}(\Omega+2\pi k_1)\equiv  \sigma_{1}(\Omega+2\pi k_1\kappa_1),
$$
and the same applies to all other singular values in the same set, i.e. 
\begin{equation}\label{eq:period_sv_with_sign}
    \sigma_{i}(\Omega) \equiv (-1)^{\kappa_1-1} \sigma_{i}(\Omega+2\pi k_1)\equiv  \sigma_{i}(\Omega+2\pi k_1\kappa_1)
\end{equation}
for every $i=1,\dots,k_1$. As a consequence all singular values of each $\nu$-orbit of $\tau$ has period $2\pi k_\nu\kappa_\nu$ where $k_\nu$ is the multiplexing index and at the same time the size of the $\nu$-set, whereas $\kappa_\nu\in\{1,2\}$ and it encodes the possible sign change. When $\kappa_\nu = 1$, we say that the singular values of the $\nu$-set are $k_\nu$-\textit{multiplexed}, and if additionally $\kappa_\nu=2$, we say that they are \textit{signed} $k_\nu$-\textit{multiplexed}. Notice that any (signed) $k_\nu$-multiplexed singular value is $2\pi k_\nu\kappa_\nu$ periodic and thus admits a Puiseux series in $z^{1/(k_\nu\kappa_\nu)}$.
For a more detailed exploration of multiplexed systems, see \cite{WPBP23}. 
\begin{example}\label{ex:1}
    Consider the matrix ${\bf A}(z)$
    $$\setlength\arraycolsep{1pt}
     \begin{pmatrix}
    2 & \sqrt 2 & \sqrt 2 & 0\\
    \sqrt 2 z^{-1} & 3 & -1 & \sqrt 2(1+z) \\
    \sqrt 2 z^{-1} & -1 & 3 & -\sqrt 2(1+z)    \\
   0 &\sqrt 2(1+z^{-1}) & -\sqrt 2(1+z^{-1}) & 4
\end{pmatrix}$$
whose real-variable equivalent ${\bf A}(\Omega)$ has analytic singular values $\sigma_1(\Omega) = 4\cos(\Omega/4)$, $\sigma_2(\Omega) = 4\sin(\Omega/4)$, $\sigma_3(\Omega) = 4+4\cos(\Omega/2)$ and $\sigma_4(\Omega)=4-4\cos(\Omega/2)$. Notice that 
    \begin{align*}
     &   |\sigma_1(\Omega+2\pi)| = 4|\cos(\Omega/4 + \pi/2)| = |\sigma_2(\Omega)|,\\
     &   |\sigma_2(\Omega+2\pi)| = 4|\sin(\Omega/4 + \pi/2)| = |\sigma_1(\Omega)|,\\
     &   |\sigma_3(\Omega+2\pi)| = |4+4\cos(\Omega/2 + \pi)| =|\sigma_4(\Omega)|,\\
     &   |\sigma_4(\Omega+2\pi)| = |4-4\cos(\Omega/2 + \pi)| =|\sigma_3(\Omega)|,
    \end{align*}
    so that the resulting permutation is $\tau=(1,2)(3,4)$ and $k_1=k_2=2$. Regarding the sign change, it is enough to test it on $\sigma_1$ and  $\sigma_3$:
    \begin{align*}
     &  \sigma_1(\Omega + 4\pi) = 4\cos(\Omega/4 + \pi) = -\sigma_1(\Omega), \\
     &   \sigma_3(\Omega+4\pi) = 4+4\cos(\Omega/2 + 2\pi) =\sigma_3(\Omega),
    \end{align*}
showing that $\kappa_1 = 2$, $\kappa_2=1$. In fact $\sigma_1$ and $\sigma_2$ have period $2\pi k_1\kappa_1 = 8\pi$ and are signed $2$-multiplexed, whereas $\sigma_3$ and $\sigma_4$ have period $2\pi k_2\kappa_2 = 4\pi$ and are $2$-multiplexed.  \hfill \flushright{$\triangle$}
\end{example}

\subsection{Periodicity of the Singular Vectors}

Let us now focus on the periodicity of the singular vectors. In the previous section we showed that each singular value has periodicity $2\pi k_\nu\kappa_\nu$, where $k_\nu$ is the size of the multiplexed set and $\kappa_\nu$ takes care of the possible sign change. 

From \cite{WPBP23} we know that any ${\bf A}(z)$ admits a SVD in Puiseux series with index $L=lcm\{k_1\kappa_1,\dots, k_R\kappa_R\}$, where $R$ is the number of the orbits of the associated permutation.
The SVD can be reformulated with the associated real-variable matrices as ${\bf A}(\Omega) = {\bf U}(\Omega){\bf \Sigma}(\Omega){\bf V}(\Omega)^H$ where all the factors have period $2\pi L$, but in general the elements of ${\bf \Sigma}(\Omega)$ have smaller periods $2\pi k_\nu\kappa_\nu$. 

Here we prove that one can always take ${\bf U}(\Omega)$ such that the left singular vectors associated to (signed) $k_\nu$-multiplexed singular values are in turn $k_\nu$-multiplexed, i.e. if $(r,r+1,\dots,r+k_\nu-1)$ is the orbit of the singular value $\sigma_i(\Omega)$ and ${\bf U}_i(\Omega)$ are the left singular vectors in ${\bf U}(\Omega)$ associated to $\sigma_i(\Omega)$, then 
\begin{itemize}
    \item ${\bf U}_s(\Omega+2\pi) = {\bf U}_{s+1}(\Omega)$ for $s=r,\dots,r+k_\nu-1$,  
    \item ${\bf U}_{r+k_\nu -1}(\Omega+2\pi) = {\bf U}_{r}(\Omega)$,
\end{itemize}
and in particular ${\bf U}_i(\Omega)$ is $2\pi k_\nu$ periodic. 

At the same time, one can always take ${\bf V}(\Omega)$ such that right singular vectors associated to (signed) $k_\nu$-multiplexed singular values are in turn (signed) $k_\nu$-multiplexed, i.e. if ${\bf V}_i(\Omega)$ are the right singular vectors in ${\bf V}(\Omega)$ associated to $\sigma_i(\Omega)$, then 
\begin{itemize}
    \item ${\bf V}_s(\Omega+2\pi) = {\bf V}_{s+1}(\Omega)$ for $s=r,\dots,r+k_\nu-1$,
    \item ${\bf V}_{r+k_\nu-1}(\Omega+2\pi ) \equiv (-1)^{\kappa_1-1}{\bf V}_{r}(\Omega)$,
\end{itemize}
and in particular ${\bf V}_i(\Omega)$ is $2\pi k_i\kappa_i$ periodic.

The following proof coincide in part with Theorem 2 in \cite{WPBP23} and Theorem 3.14 in \cite{BV22}. We make use of Proposition 3.9 of \cite{BV22}, where it is proved that for any para-Hermitian matrix ${\bf A}(z)$ there exists an EVD ${\bf A}(\Omega) = {\bf W}(\Omega){\bf D}(\Omega){\bf W}(\Omega)^H$ where the eigenvectors in ${\bf W}(\Omega)$ have the same period as the respective eigenvalues on the diagonal of ${\bf D}(\Omega)$, and from the proof of Theorem 3.14 in the same document, we can also suppose that the eigenvectors ${\bf W}_i(\Omega)$ relative to an eigenvalue $\lambda_i(\Omega)$ are $k_\nu$-multiplexed when the eigenvalue itself is $k_\nu$-multiplexed. 

\begin{theorem}\label{th:multiplexed_singular_vectors_SVD}
For an analytic matrix $\textbf{A}(z)$, there exists an analytic SVD on the unit circumference 
$$\textbf{A}(e^{j\Omega}) = \textbf{U}(\Omega)
    {\bf \Sigma}(\Omega) 
    \textbf{V}(\Omega)^H,$$
where the diagonal matrix ${\bf \Sigma}(\Omega)$ may have negative entries and contains the singular values of ${\bf A}(\Omega)$ up to the sign. Moreover, each nonzero singular value $\sigma_i(\Omega)$ in $ {\bf \Sigma}(\Omega) $ is (signed) $k_\nu$-multiplexed and
\begin{itemize}
    \item    the associated left singular vectors $\textbf{U}_i(\Omega)$ are $k_\nu$-multiplexed, and in particular $2\pi k_\nu$ periodic, 
    \item    the associated right singular vectors $\textbf{V}_i(\Omega)$ are (signed) $k_\nu$-multiplexed, and in particular $2\pi k_\nu\kappa_\nu$ periodic.
\end{itemize}
\end{theorem}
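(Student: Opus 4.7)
The plan is to reduce the construction to the analytic EVD of the para-Hermitian matrix $\textbf{M}(\Omega):=\textbf{A}(\Omega)\textbf{A}(\Omega)^H$, exploiting that squaring kills the sign ambiguity $\kappa_\nu$ so that the eigenvector results of \cite{BV22} apply cleanly.

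First I would invoke Proposition 3.9 of \cite{BV22}, together with the strengthening used in the proof of Theorem 3.14 of \cite{BV22}, on $\textbf{M}(\Omega)$. Its nonzero analytic eigenvalues are $|\sigma_i(\Omega)|^2$, which by \eqref{eq:period_sv_with_sign} are $2\pi k_\nu$ periodic and $k_\nu$-multiplexed since $[(-1)^{\kappa_\nu-1}]^2 = 1$. The distinctness hypothesis carries over thanks to the normalization made before \eqref{eq:first_set_sv_shift}: if $\sigma_i \not\equiv \sigma_j$ then $|\sigma_i| \not\equiv |\sigma_j|$, so the squared eigenvalues are truly distinct as analytic functions. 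This produces an EVD $\textbf{M}(\Omega) = \textbf{Q}_1(\Omega)\textbf{D}_1(\Omega)\textbf{Q}_1(\Omega)^H$ in which the columns of $\textbf{Q}_1$ associated to the $\nu$-th orbit are $k_\nu$-multiplexed and $2\pi k_\nu$ periodic. Setting $\textbf{U}(\Omega) := \textbf{Q}_1(\Omega)$ immediately delivers the left singular vectors with the asserted shift $\textbf{U}_s(\Omega+2\pi) = \textbf{U}_{s+1}(\Omega)$, wrap $\textbf{U}_{r+k_\nu-1}(\Omega+2\pi) = \textbf{U}_r(\Omega)$, and period $2\pi k_\nu$.

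The second step is to define $\textbf{V}$ from $\textbf{U}$ via the deterministic formula $\textbf{V}_i(\Omega) := \sigma_i(\Omega)^{-1}\textbf{A}(\Omega)^H\textbf{U}_i(\Omega)$ on each nonzero singular value block, extended analytically across isolated zeros of $\sigma_i$ by a removable singularity argument: the existence of any analytic SVD guaranteed by Theorem \ref{Th:Analytic_SVD_real} forces $\textbf{A}(\Omega)^H\textbf{U}_i(\Omega)$ to vanish to exactly the order of $\sigma_i$. Orthonormality reduces to $\textbf{U}_i^H\textbf{M}\textbf{U}_j = \sigma_j^2\delta_{ij}\textbf{I}$, and the identity $\textbf{A} = \sum_i \sigma_i\textbf{U}_i\textbf{V}_i^H$ follows because $\sigma_i\textbf{U}_i\textbf{V}_i^H = \textbf{U}_i\textbf{U}_i^H\textbf{A}$ and the sum of projections onto the $\textbf{U}_i$ blocks is the projection onto the range of $\textbf{A}$. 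Substituting $\sigma_i(\Omega+2\pi) = \sigma_{i+1}(\Omega)$, $\textbf{U}_i(\Omega+2\pi) = \textbf{U}_{i+1}(\Omega)$ and the $2\pi$ periodicity of $\textbf{A}$ into the formula yields $\textbf{V}_s(\Omega+2\pi) = \textbf{V}_{s+1}(\Omega)$ for $s = r,\dots,r+k_\nu-2$. For the wrap index, iterating the shift $k_\nu$ times brings $\textbf{U}_r$ back to itself and sends $\sigma_r(\Omega)$ to $(-1)^{\kappa_\nu-1}\sigma_r(\Omega)$ by \eqref{eq:period_sv_with_sign}, producing $\textbf{V}_{r+k_\nu-1}(\Omega+2\pi) = (-1)^{\kappa_\nu-1}\textbf{V}_r(\Omega)$, so $\textbf{V}_i$ is (signed) $k_\nu$-multiplexed with period $2\pi k_\nu\kappa_\nu$.

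The main obstacle I expect is cleanly completing $\textbf{V}$ on the zero singular value when $\textbf{A}(\Omega)$ is rank-deficient, where the defining formula is unavailable. I would resolve this by running the same EVD argument on $\textbf{A}(\Omega)^H\textbf{A}(\Omega)$ and extracting an analytic orthonormal basis of its kernel, which is automatically $2\pi$ periodic; these vectors are orthogonal to the previously constructed $\textbf{V}_i$ since the latter sit in the row space of $\textbf{A}(\Omega)$. A secondary point of care is matching the column ordering of $\textbf{Q}_1$ (arranged by EVD multiplicity blocks) with the sorting of $\textbf{\Sigma}(\Omega)$, which is handled by the uniqueness of the dyadic factor $\textbf{U}_i\textbf{V}_i^H$ from \eqref{eq:dyadic_svd} combined with the relation \eqref{eq:SVD_from_EVD} between the SVD and the two EVDs.
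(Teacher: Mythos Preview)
Your argument is correct and shares its opening move with the paper---both apply the $k_\nu$-multiplexed EVD of \cite{BV22} to ${\bf A}{\bf A}^H$ and set ${\bf U}:={\bf Q}_1$---but you diverge in the construction of ${\bf V}$. The paper runs a \emph{second} EVD on ${\bf A}^H{\bf A}$ to obtain ${\bf Q}_2$, invokes the block-unitary link ${\bf \Psi}$ from \eqref{eq:SVD_from_EVD}, proves ${\bf \Psi}$ analytic through the dyadic uniqueness \eqref{eq:dyadic_svd}, and sets ${\bf V}:={\bf Q}_2{\bf \Psi}^H$; the (signed) multiplexing of ${\bf V}_i$ is then read off from the identity ${\bf U}_i{\bf V}_i^H={\bf U}_{i+1}{\bf V}_{i+1}^H$ after a $2\pi$ shift. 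You instead bypass ${\bf \Psi}$ entirely via the explicit formula ${\bf V}_i=\sigma_i^{-1}{\bf A}^H{\bf U}_i$ and handle the isolated zeros of $\sigma_i$ by a removable-singularity argument (which is sound: $\|{\bf A}^H{\bf U}_i\|_F^2=q_i\sigma_i^2$ gives the needed bound, so ``exactly the order'' can safely be weakened to ``at least the order''). Your route is more elementary on the nonzero blocks and makes the sign transfer $(-1)^{\kappa_\nu-1}$ to ${\bf V}$ transparent by direct substitution, at the cost of a separate step to complete ${\bf V}$ on the kernel; the paper's route treats zero and nonzero blocks uniformly through ${\bf Q}_2$ but pays with the extra analyticity check on ${\bf \Psi}$. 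Note that your final paragraph's appeal to \eqref{eq:SVD_from_EVD} is unnecessary in your own setup: the ordering of ${\bf \Sigma}$ is already dictated by ${\bf Q}_1$, and your explicit ${\bf V}_i$ inherits it automatically.
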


\begin{proof}
From Theorem \ref{Th:Analytic_SVD_real}, the SVD ${\bf A}(\Omega) =\widetilde {\bf U}(\Omega){\bf \Sigma}(\Omega)\widetilde{\bf V}(\Omega)^H$ presents nonzero singular values that are $2\pi k_\nu \kappa_\nu$ periodic depending on the orbits of the permutation. Without loss of generality, suppose the permutation is $\tau=(1,\dots,k_1)(k_1+1,\dots)\dots$ and notice that from \eqref{eq:period_sv_with_sign},
$$\sigma_{i}(\Omega)^2 \equiv  \sigma_{i}(\Omega+2\pi k_\nu)^2 \quad \forall i,$$
i.e., the square of all singular values are $2\pi k_\nu$ periodic. Both ${\bf R}_1(z):= {\bf A}(z){\bf A}^P(z)$ and ${\bf R}_2(z):= {\bf A}^P(z){\bf A}(z)$ are positive semi-definite para-Hermitian matrices and have the same nonzero analytic eigenvalues $\lambda_i(\Omega)$ that are the square of the nonzero singular values in ${\bf \Sigma}(\Omega)$.
We can always suppose, up to a sign change, that ${\bf \Sigma}(\Omega)\ge 0$ on a small interval $\Omega\in [0,\varepsilon]$, so that if $\sigma_i(\Omega) = -\sigma_j(\Omega)$ for all $\Omega$, then necessarily $\sigma_i \equiv \sigma_j\equiv 0$. As a consequence, there's a unique way to associate the nonzero singular values $\sigma_i(\Omega)$ to the nonzero eigenvalues $\lambda_i(\Omega)=\sigma_i(\Omega)^2$ and in particular they have the same multiplicity, and if $\sigma_i(\Omega)$ is (signed) $k_\nu$-multiplexed, then the eigenvalue $\lambda_i(\Omega)$ is $k_\nu$-multiplexed with the same orbit.
One can thus write down the EVD of the para-Hermitian matrices as
\begin{align*}
    &{\bf R}_1(\Omega) = {\bf Q}_1(\Omega){\bf D}_1(\Omega){\bf Q}_1(\Omega)^H,\\
    &{\bf R}_2(\Omega) = {\bf Q}_2(\Omega){\bf D}_2(\Omega){\bf Q}_2(\Omega)^H,
\end{align*}
where the eigenvectors in ${\bf Q}_1(\Omega)$ and ${\bf Q}_2(\Omega)$ relative to the eigenvalue $\lambda_i(\Omega)\not\equiv 0$ have the same period $2\pi k_\nu$ and are $k_\nu$-multiplexed when $\lambda_i(\Omega)$ and $\sigma_i(\Omega)$ are $k_\nu$-multiplexed, and ${\bf D}_1(\Omega)$, ${\bf D}_2(\Omega)$ present in order the same nonzero eigenvalues on their diagonals. From \eqref{eq:SVD_from_EVD}, we can then formulate a SVD of ${\bf A}(\Omega)$ as ${\bf A}(\Omega) = {\bf Q}_1(\Omega){\bf \Sigma}(\Omega){\bf \Psi}(\Omega){\bf Q}_2^H(\Omega)$ only for the $\Omega$ where ${\bf \Sigma}(\Omega)\ge 0$, but if some singular value is negative, we can always multiply a diagonal $\pm 1$ matrix to ${\bf \Psi}(\Omega)$ that keeps it unitary and with the same blocks on the diagonal. Moreover, we can always suppose that the elements on the diagonal of ${\bf \Psi}(\Omega)$ relative to identically zero singular values are just $1$.   

Let  ${\bf Q}_1^{(i)}(\Omega)$ and $[{\bf Q}_2(\Omega){\bf \Psi}(\Omega)^H]^{(i)} = {\bf Q}^{(i)}_2(\Omega){\bf \Psi}_i(\Omega)^H$  be the left and right singular vectors relative to tho nonzero singular value $\sigma_i(\Omega)$. Recall that up to a negligible set, for all $\Omega\in \mathbb R$ the values $|\sigma_i(\Omega)|$ are distinct, so from now on we always suppose that it holds. As a consequence, by the uniqueness of \eqref{eq:dyadic_svd} and the analytic SVD ${\bf A}(\Omega) = \widetilde{\bf U}(\Omega){\bf \Sigma}(\Omega)\widetilde{\bf V}(\Omega)^H$ we have
\begin{align*}
  &{\bf Q}_1^{(i)}(\Omega){\bf \Psi}_i(\Omega) {\bf Q}^{(i)}_2(\Omega)^H =\widetilde{\bf U}_i(\Omega)\widetilde{\bf V}_i(\Omega)^H\\
  \implies &{\bf \Psi}_i(\Omega) = {\bf Q}_1^{(i)}(\Omega)^H
  \widetilde{\bf U}_i(\Omega)\widetilde{\bf V}_i(\Omega)^H
  {\bf Q}^{(i)}_2(\Omega),
\end{align*}
proving that each block of ${\bf \Psi}(\Omega)$, and thus the whole ${\bf \Psi}(\Omega)$, is also analytic. 

We can thus consider the two unitary analytic matrices ${\bf U}(\Omega):={\bf Q}_1(\Omega)$ and ${\bf V}(\Omega):={\bf Q}_2(\Omega){\bf \Psi}(\Omega)^H$ and form the analytic SVD ${\bf A}(\Omega) = {\bf U}(\Omega){\bf \Sigma}(\Omega){\bf V}(\Omega)^H$ where ${\bf U}(\Omega)$ already satisfies the thesis. Recall that the permutation is $\tau=(1,\dots,k_1)(k_1+1,\dots)\dots$, so we can focus on the first set of multiplexed singular values, for which we can always suppose
\begin{align*}
    &\sigma_{i+1}(\Omega) \equiv \sigma_i(\Omega+2\pi), &  \forall i=1,\dots,k_1-1,\\
    &\sigma_{i}(\Omega) \equiv (-1)^{\kappa_1-1} \sigma_{i}(\Omega+2\pi k_1), &  \forall i=1,\dots,k_1.
\end{align*}
 As a consequence, from the uniqueness of the decomposition \eqref{eq:dyadic_svd}, the $2\pi$ periodicity of ${\bf A}(\Omega)$, and the fact that ${\bf U}_i(\Omega)$ is $k_\nu$-multiplexed we find that
\begin{align*}
    {\bf U}_{i+1}(\Omega ){\bf V}_i(\Omega+2\pi)^H &= 
{\bf U}_i(\Omega+2\pi ){\bf V}_i(\Omega+2\pi)^H \\&= {\bf U}_{i+1}(\Omega){\bf V}_{i+1}(\Omega)^H\\
\implies &\quad  {\bf V}_i(\Omega+2\pi) ={\bf V}_{i+1}(\Omega)
\end{align*}
for any $i=1,\dots,k_1-1$ and 
\begin{align*}
    {\bf U}_{i}(\Omega){\bf V}_{i}(\Omega+2\pi k_1)^H &= 
{\bf U}_{i}(\Omega+2\pi k_1){\bf V}_{i}(\Omega+2\pi k_1)^H \\&= (-1)^{\kappa_1-1} {\bf U}_{i}(\Omega){\bf V}_{i}(\Omega)^H\\
\implies &\quad {\bf V}_{i}(\Omega+2\pi k_1) = (-1)^{\kappa_1-1}{\bf V}_i(\Omega)
\end{align*}
for any $i=1,\dots,k_1$. Repeating the same steps for all orbits, the thesis is proved.
\end{proof}

\begin{example}
Let ${\bf A}(z)$ be the same matrix as in Example \ref{ex:1} and consider the unitary matrices
\begin{align*}
    {\bf U}(\Omega) &:= \setlength\arraycolsep{2pt}\frac 12
\begin{pmatrix}
    \sqrt 2& \sqrt 2 &0 &0\\
    e^{-j\Omega/2}&-e^{-j\Omega/2}&1&1\\
    e^{-j\Omega/2}&-e^{-j\Omega/2}&-1&-1\\
    0&0&\sqrt 2 e^{-j\Omega/2}&-\sqrt 2 e^{-j\Omega/2}
\end{pmatrix}
,\\
{\bf V}(\Omega) &:= \setlength\arraycolsep{.5pt} \frac 12
\begin{pmatrix}
    \sqrt 2 e^{j\Omega/4} & j\sqrt 2e^{j\Omega/4}& 0&0\\
    e^{-j\Omega/4} & -je^{-j\Omega/4} & 1 & 1 \\
   e^{-j\Omega/4} & -je^{-j\Omega/4} & -1 & -1 \\
    0&0 & \sqrt 2e^{-j\Omega/2} & -\sqrt 2e^{-j\Omega/2}
\end{pmatrix}.
\end{align*}
Then ${\bf A}(\Omega) = {\bf U}(\Omega){\bf \Sigma}(\Omega){\bf V}(\Omega)^H$ and the singular values on the diagonal of ${\bf \Sigma}(\Omega)$ are in order $\sigma_1(\Omega) = 4\cos(\Omega/4)$, $\sigma_2(\Omega) = -4\sin(\Omega/4)$, $\sigma_3(\Omega) = 4+4\cos(\Omega/2)$ and $\sigma_4(\Omega)=4-4\cos(\Omega/2)$. In this case $\sigma_1(\Omega)$ and $\sigma_2(\Omega)$ are signed $2$-multiplexed, and in fact ${\bf U}_1(\Omega)$, ${\bf U}_2(\Omega)$ are $2$-multiplexed since
$${\bf U}_1(\Omega+4\pi) = {\bf U}_2(\Omega+2\pi) = {\bf U}_1(\Omega)$$
and ${\bf V}_1(\Omega)$, ${\bf V}_2(\Omega)$ are signed $2$-multiplexed since
$${\bf V}_1(\Omega+4\pi) = {\bf V}_2(\Omega+2\pi) = -{\bf V}_1(\Omega).$$
Similarly, $\sigma_3(\Omega)$ and $\sigma_4(\Omega)$ are $2$-multiplexed, and in fact one can check that ${\bf U}_3(\Omega)$, ${\bf U}_4(\Omega)$, ${\bf V}_3(\Omega)$, ${\bf V}_4(\Omega)$ are all $2$-multiplexed. \hfill \flushright{$\triangle$}
\end{example}

\section{Diagonal Complex Decomposition}

 {
}

Here we discuss how to replace the signed multiplexed singular values and vectors with non-signed ones, in fact raising the regularity of the involved quantities by reducing their periods by a factor 2. In exchange, we have to admit that the singular values may be complex-valued. 
In fact, if $\sigma_1(\Omega),\dots,\sigma_k(\Omega)$ and the relative right singular vectors in ${\bf V}(\Omega)$ in the decomposition of Theorem \ref{th:multiplexed_singular_vectors_SVD} are signed $k$-multiplexed, then $s_i(\Omega) := \sigma_i(\Omega)e^{j\Omega/(2k)}\omega_{2k}^{i-1}$ and $\widetilde {\bf V}_i(\Omega) := {\bf V}_i(\Omega)e^{j\Omega/(2k)}\omega_{2k}^{i-1}$ are just $k$-multiplexed,  where $\omega_{p}:= e^{2\pi j/p}$ and $|s_i(\Omega)| = |\sigma_i(\Omega)|$ are the singular values of ${\bf A}(\Omega)$. As a consequence, it is sufficient to multiply ${\bf \Sigma}(\Omega)$ and ${\bf V}(\Omega)$ by an opportune unitary matrix to get rid of the sign change in the multiplexed singular values and vectors.

\begin{theorem}\label{th:complex_SVD}
For an analytic matrix $\textbf{A}(z)$, there exists an analytic complex diagonalization on the unit circumference 
$$\textbf{A}(e^{j\Omega}) = \textbf{U}(\Omega)
    {\bf S}(\Omega) 
    \textbf{V}(\Omega)^H,$$
where the diagonal matrix ${\bf S}(\Omega)$ may have complex entries whose absolute values are the singular values of ${\bf A}(\Omega)$ and ${\bf U}(\Omega)$, ${\bf V}(\Omega)$ are paraunitary. Moreover, if the nonzero singular value $\sigma_i(\Omega)$ is (signed) $k_\nu$-multiplexed, then the relative nonzero entry $s_i(\Omega)$, the  associated left singular vectors $\textbf{U}_i(\Omega)$ and the associated right singular vectors $\textbf{V}_i(\Omega)$ are all $k_\nu$-multiplexed, and in particular $2\pi k_\nu$ periodic.
\end{theorem}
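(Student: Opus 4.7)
The plan is to start from the real-analytic SVD $\mathbf{A}(\Omega) = \mathbf{U}(\Omega)\mathbf{\Sigma}(\Omega)\mathbf{V}(\Omega)^H$ given by Theorem \ref{th:multiplexed_singular_vectors_SVD} and absorb the sign ambiguity of every signed multiplexed orbit into an analytic complex phase, as hinted in the paragraph preceding the statement. Concretely, for each orbit $(r,r+1,\ldots,r+k-1)$ with $\kappa_\nu = 2$ (so $k=k_\nu$), define
\[
\phi_i(\Omega) := e^{j\Omega/(2k)}\,\omega_{2k}^{\,i-r}, \qquad i=r,\dots,r+k-1,
\]
with $\omega_{2k}=e^{j\pi/k}$. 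For orbits that are already $k_\nu$-multiplexed without sign change ($\kappa_\nu=1$) and for the diagonal entries corresponding to zero singular values, set $\phi_i(\Omega)\equiv 1$. Collect these scalars into an analytic diagonal unitary matrix $\mathbf{\Phi}(\Omega)$ and define
\[
\mathbf{S}(\Omega) := \mathbf{\Sigma}(\Omega)\mathbf{\Phi}(\Omega), \qquad \widetilde{\mathbf{V}}(\Omega) := \mathbf{V}(\Omega)\mathbf{\Phi}(\Omega).
\]
Then $\mathbf{A}(\Omega) = \mathbf{U}(\Omega)\mathbf{S}(\Omega)\widetilde{\mathbf{V}}(\Omega)^H$ is still an analytic decomposition, and because $\mathbf{\Phi}(\Omega)$ is unitary at every $\Omega$ the matrix $\widetilde{\mathbf{V}}$ remains unitary on $\mathbb{R}$, hence paraunitary; moreover $|s_i(\Omega)| = |\sigma_i(\Omega)|$ recovers the classical singular values.

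The second step is to verify the new multiplexing. The crucial observation is the shift identity $\phi_i(\Omega+2\pi) = e^{j\pi/k}\phi_i(\Omega) = \phi_{i+1}(\Omega)$ for $i<r+k-1$, so that combined with $\sigma_i(\Omega+2\pi) \equiv \sigma_{i+1}(\Omega)$ from Theorem \ref{th:multiplexed_singular_vectors_SVD} one obtains
\[
s_i(\Omega+2\pi) = \sigma_{i+1}(\Omega)\phi_{i+1}(\Omega) = s_{i+1}(\Omega),
\]
and in the same way $\widetilde{\mathbf{V}}_i(\Omega+2\pi) = \widetilde{\mathbf{V}}_{i+1}(\Omega)$. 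For the wrap-around index $i=r+k-1$, the identity $\omega_{2k}^{\,k}=-1$ gives $\phi_{r+k-1}(\Omega+2\pi) = -\phi_r(\Omega)$, and combining this with $\sigma_{r+k-1}(\Omega+2\pi) \equiv -\sigma_r(\Omega)$ (the sign contributed by $\kappa_\nu=2$) and $\mathbf{V}_{r+k-1}(\Omega+2\pi) \equiv -\mathbf{V}_r(\Omega)$ yields
\[
s_{r+k-1}(\Omega+2\pi) = (-\sigma_r(\Omega))(-\phi_r(\Omega)) = s_r(\Omega), \qquad \widetilde{\mathbf{V}}_{r+k-1}(\Omega+2\pi) = \widetilde{\mathbf{V}}_r(\Omega).
\]
Each signed orbit has thus been converted into an unsigned $k_\nu$-multiplexed orbit, so $s_i$ and $\widetilde{\mathbf{V}}_i$ are $2\pi k_\nu$ periodic. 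The left singular vectors $\mathbf{U}_i$ are left untouched, and are already $k_\nu$-multiplexed by Theorem \ref{th:multiplexed_singular_vectors_SVD}.

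There is no substantial obstacle: the construction is essentially algebraic once one recognises that the additive character $e^{j\Omega/(2k)}$ rotates the orbit indices by one position per $2\pi$-shift, while the primitive $(2k)$-th root $\omega_{2k}$ accumulates exactly one factor of $-1$ after $k$ such shifts, cancelling the sign inherited from $\kappa_\nu=2$. The only mildly delicate point is the bookkeeping of $\mathbf{\Phi}$ across orbits — setting it equal to the identity on the unsigned orbits and on the diagonal positions associated to zero singular values, so that the already-good properties granted by Theorem \ref{th:multiplexed_singular_vectors_SVD} are not spoiled on those blocks, while the paraunitarity of $\widetilde{\mathbf{V}}$ is preserved globally.
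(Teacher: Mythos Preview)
Your proof is correct and follows essentially the same approach as the paper: start from Theorem~\ref{th:multiplexed_singular_vectors_SVD}, multiply each signed orbit by the phase $e^{j\Omega/(2k_\nu)}\omega_{2k_\nu}^{\,i-r}$, and verify that this converts the signed $k_\nu$-multiplexing of $\sigma_i$ and $\mathbf{V}_i$ into plain $k_\nu$-multiplexing while leaving $\mathbf{U}_i$ untouched. The paper's proof uses the unified formula $e^{j\Omega/(k_\nu\kappa_\nu)}\omega_{k_\nu\kappa_\nu}^{\,i-1}$ on every orbit (signed or not), but this is only a cosmetic difference from your choice of $\phi_i\equiv 1$ on the unsigned orbits.
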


\begin{proof}
Thanks to Theorem \ref{th:multiplexed_singular_vectors_SVD}, there always exists an analytic decomposition
$${\bf A}(\Omega) = {\bf U}(\Omega)
{\bf \Sigma}(\Omega)
\widetilde {\bf V}(\Omega)^H,$$
where for each (signed) $k_\nu$-multiplexed nonzero singular value $\sigma_i(\Omega)$ in $ {\bf \Sigma}(\Omega)$, the associated left singular vectors $  {\bf U}_i(\Omega)$ are $k_\nu$-multiplexed, and the associated right singular vectors $ \widetilde {\bf V}_i(\Omega)$ are (signed) $k_\nu$-multiplexed.

Let us now focus on the first set of singular values $\sigma_i(\Omega)$ with $1\le i\le k_1$ that are (signed) $k_1$-multiplexed and define $s_i(\Omega) := \sigma_i(\Omega)e^{j\Omega/(k_1\kappa_1)}
\omega_{k_1\kappa_1}^{i-1}$. All $s_i(\Omega)$ are $k_1$-multiplexed since 
\begin{align*}
   s_i(\Omega +2\pi) =&\,\sigma_i(\Omega+2\pi)e^{j\Omega/(k_1\kappa_1)}\omega_{k_1\kappa_1}\omega_{k_1\kappa_1}^{i-1}\\
   = &\,\sigma_{i+1}(\Omega)e^{j\Omega/(k_1\kappa_1)}\omega_{k_1\kappa_1}^{i}
=s_{i+1}(\Omega)\end{align*}
for $i=1,\dots,k_1-1$ and 
\begin{align*}
    s_i(\Omega +2\pi k_1)  =&\,\sigma_i(\Omega +2\pi k_1)e^{j\Omega/(k_1\kappa_1)}e^{j2\pi/\kappa_1}\omega_{k_1\kappa_1}^{i-1}\\
    =&\,\sigma_i(\Omega)e^{j\Omega/(k_1\kappa_1)}\omega_{k_1\kappa_1}^{i-1} =s_i(\Omega)
\end{align*}
for $i=1,\dots,k_1$. Using the same reasoning, one can prove also that ${\bf V}_i(\Omega):= \widetilde{\bf V}_i(\Omega)e^{j\Omega/(k_1\kappa_1)}\omega_{k_1\kappa_1}^{i-1}$ are $k_1$-multiplexed. 
Repeating the same reasoning for all the (copies of all) the orbits of $\tau$, we find a decomposition of the form  $${\bf A}(\Omega) = \sum_{i}s_i(\Omega){\textbf{U}}_{i}(\Omega)
   {\textbf{V}}_{i}(\Omega)^H = {\textbf{U}}(\Omega){\bf S}(\Omega)
   {\textbf{V}}(\Omega)^H$$
 with diagonal complex matrix ${\bf S}(\Omega)$.
\end{proof}

\begin{example}
\label{ex:2}Let ${\bf A}(z)$ be
$$
{\bf A}(z) := \frac 12
\begin{pmatrix}
    -\sqrt 2j(z^{-2} +z) & -j(z+z^{-1}) & -j(z+z^{-1})\\
    \sqrt 2(z - z^{-2} ) & z-z^{-1} & z-z^{-1}
\end{pmatrix},$$
that has left and right singular vectors
\begin{align*}
    {\bf U}(\Omega) &:=-\frac 12
\begin{pmatrix}
    j(e^{-j2\Omega} +e^{j\Omega/2})   & j(e^{-j2\Omega}- e^{j\Omega/2})\\
  e^{-j2\Omega} - e^{j\Omega/2}   &e^{-j2\Omega} +e^{j\Omega/2}
\end{pmatrix}
,\\
{\bf V}(\Omega) &:=  \frac 12
\begin{pmatrix}
    \sqrt 2 e^{-j\Omega/4} & -\sqrt 2 j e^{-j\Omega/4}& 0\\
   e^{-j3\Omega/4} & je^{-j3\Omega/4} & \sqrt 2  \\
   e^{-j3\Omega/4} & j e^{-j3\Omega/4} & -\sqrt 2
\end{pmatrix}
\end{align*}
and SVD ${\bf A}(\Omega) = {\bf U}(\Omega){\bf \Sigma}(\Omega){\bf V}(\Omega)^H$ with $\sigma_1(\Omega) = 2\cos(\Omega/4)$, $\sigma_2(\Omega) = -2\sin(\Omega/4)$. In this case $\sigma_1(\Omega)$ and $\sigma_2(\Omega)$ are signed $2$-multiplexed, so $k_1=\kappa_1=2$. Moreover, ${\bf U}_1(\Omega)$, ${\bf U}_2(\Omega)$ are $2$-multiplexed and ${\bf V}_1(\Omega)$, ${\bf V}_2(\Omega)$ are signed $2$-multiplexed. We eliminate the sign ambiguity in transforming the singular values into $s_i(\Omega)$ by multiplication with $e^{j\Omega/(k_1\kappa_1)}\omega_{k_1\kappa_1}^{i-1} = e^{j\Omega/4}
j^{i-1}$, thus
\[
s_1(\Omega) = \sigma_1(\Omega)e^{j\Omega/4} =2\cos(\Omega/4)e^{j\Omega/4} ,\quad  s_2(\Omega) = \sigma_2(\Omega)e^{j\Omega/4}j = -2\sin(\Omega/4)e^{j\Omega/4}j.
\]
Notice that they are 2-multiplexed without the sign ambiguity since
\begin{align*}
    s_1(\Omega+2\pi) &= 2\cos(\Omega/4+\pi/2)e^{j\Omega/4 +j\pi/2}
    = -2\sin(\Omega/4)e^{j\Omega/4}j = s_2(\Omega),\\
    s_2(\Omega+2\pi) &= -2\sin(\Omega/4 + \pi/2)e^{j\Omega/4 + j\pi/2}j
    = 2\cos(\Omega/4)e^{j\Omega/4 } =s_1(\Omega).
\end{align*}
If ${\bf S}(\Omega)$ is a diagonal matrix with $s_i(\Omega)$ as diagonal elements, then
$${\bf A}(\Omega) = {\bf U}(\Omega){\bf S}(\Omega)
\begin{pmatrix}
    e^{j\Omega/4} & &  \\
    & e^{j\Omega/4}j & \\
    & & 1
\end{pmatrix}^H
{\bf V}(\Omega)^H$$
where the first two columns of the unitary matrix
\[
{\bf V}(\Omega)\begin{pmatrix}
    e^{j\Omega/4} & &  \\
    & e^{j\Omega/4}j & \\
    & & 1
\end{pmatrix} = 
\frac 12
\begin{pmatrix}
    \sqrt 2 & \sqrt 2 & 0\\
   e^{-j\Omega/2} & -e^{-j\Omega/2} & \sqrt 2  \\
   e^{-j\Omega/2} & - e^{-j\Omega/2} & -\sqrt 2
\end{pmatrix}
\]
are now $2$-multiplexed with no sign ambiguity.\hfill \flushright{$\triangle$}
\end{example}

\section{Diagonal Pseudo-Circulant Decomposition}

The multiplexed singular values are tightly linked to the eigenvalues of certain structured matrices, called \textit{pseudo-circulant}, that are $2\pi$ periodic $N\times N$ normal matrices ${\bf C}(\Omega)$ for which there exist analytic $2\pi$ periodic functions $\phi_0(\theta),\dots, \phi_{N-1}(\theta)$ such that ${\bf C}(\Omega)$ is in the form	\[ 
	\begin{bmatrix}
	\phi_0(\Omega) &e^{-j\Omega}\phi_{N-1}(\Omega) &\dots & e^{-j\Omega}\phi_{1}(\Omega)\\
	\phi_1(\Omega) & \phi_0(\Omega)& \ddots & \vdots\\
	\vdots& \ddots & \ddots& e^{-j\Omega}\phi_{N-1}(\Omega)\\
	\phi_{N-1}(\Omega)&\dots &\phi_1(\Omega)  &\phi_0(\Omega)
	\end{bmatrix}
	\]
One can prove that the eigenvalues $\lambda_i(\Omega)$ are all $N$-multiplexed and all pseudo-circulant matrices are diagonalized by the same unitary transformation
\begin{equation}\label{eq:WN}
    {\bf W}_N(\Omega) := \diag(e^{j\Omega k/N})_{k=0,\dots,N-1}{\bf F}_N 
= {\bf D}_N(\Omega){\bf F}_N,
\end{equation}
 where ${\bf F}_N$ is the $N\times N$ unitary Fourier matrix. The eigenvalues can be expressed in term of the $\phi_k(\Omega)$ as
$$\lambda_i(\Omega) = \sum_{k=0}^{N-1} e^{-j\Omega k/N}\phi_k(\Omega)\omega_N^{-ki}, $$
 where $\omega_N:= e^{j 2\pi/N}$.
 
Moreover, any set $\{\lambda_1(\Omega),\dots,\lambda_N(\Omega)\}$ of analytic $N$-multiplexed functions is the set of eigenvalues for some pseudo-circulant matrix, i.e.
$${\bf W}_N(\Omega)\diag(\lambda_k(\Omega))_{k=1,\dots,N} {\bf W}_N(\Omega)^H$$
is always pseudo-circulant. The functions $\phi_k(\Omega)$ can be expressed in terms of the eigenvalues as
$$ \phi_{k}(\Omega)
 = 
 \frac 1N
 e^{j\Omega k/N}
 \sum_{i=0}^{N-1}	
 \lambda_i(\Omega)
 \omega_N^{ki}.$$
An important technical result in \cite{BV22} proves that the columns of ${\bf W}_N(\Omega)$ are $N$-multiplexed, i.e.
\begin{equation}\label{eq:WN_multiplexed}
   {\bf W}_N(\Omega+2\pi) = {\bf W}_N(\Omega)P_N,
\end{equation}
where $P_N$ is the constant permutation matrix
\begin{equation}\label{eq:PN}
    \begin{bmatrix}
	& & & 1\\
	1 & & & \\
	& \ddots & &\\
	& & 1 &
	\end{bmatrix}.
\end{equation}

The proof of the main result showing the existence of the holomorphic pseudo-circulant block decomposition follows the same arguments of Theorem 3.14 in \cite{BV22}. The main idea is to group the multiplexed singular values in the diagonal complex decomposition from Theorem \ref{th:complex_SVD} and apply  the base change ${\bf W}(\Omega)$ to make them block pseudo-circulant. Notice that this wasn't possible to achieve in presence of signed multiplexed singular values. 
The unitary matrices undergo the same base change, but since they are also multiplexed thanks to Theorem \ref{th:complex_SVD},  they also become $2\pi$-periodic. 

\begin{theorem}\label{th:pseudocirculant_SVD}
For an analytic matrix $\textbf{A}(z)$, there exists an analytic decomposition 
$${\bf A}(z) = {\bf U}(z) {\bf C}(z) {\bf V}(z)^P$$
with para-unitary matrices ${\bf U}(z)$, ${\bf V}(z)$ and pseudo-circulant block-diagonal matrix ${\bf C}(z)$, where each block has size $k_\nu$ corresponding to a set of (signed) $k_\nu$ singular values.  
\end{theorem}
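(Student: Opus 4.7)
My plan is to upgrade the complex diagonal decomposition of Theorem~\ref{th:complex_SVD} to a pseudo-circulant block form by applying the Fourier-like base change \eqref{eq:WN} orbit by orbit. Starting from
$${\bf A}(\Omega)={\bf U}(\Omega){\bf S}(\Omega){\bf V}(\Omega)^H,$$
I use the fact that for each orbit $\nu$ of size $k_\nu$ the diagonal entries $s_i(\Omega)$ and the corresponding columns ${\bf U}_i(\Omega)$, ${\bf V}_i(\Omega)$ are all genuinely $k_\nu$-multiplexed (no sign twist, thanks to Theorem~\ref{th:complex_SVD}). Up to a constant reordering of columns absorbed into ${\bf U}$, ${\bf S}$ and ${\bf V}$, I can assume that each orbit occupies a consecutive block of indices of size $k_\nu$ along the diagonal of ${\bf S}$, followed by a rectangular tail of zero entries.

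Next I introduce the two block-diagonal unitaries
$${\bf W}_L(\Omega):={\bf W}_{k_1}(\Omega)\oplus\cdots\oplus{\bf W}_{k_R}(\Omega)\oplus{\bf I},\qquad {\bf W}_R(\Omega):={\bf W}_{k_1}(\Omega)\oplus\cdots\oplus{\bf W}_{k_R}(\Omega)\oplus{\bf I},$$
of sizes matching ${\bf U}$ and ${\bf V}$ respectively, and rewrite
$${\bf A}(\Omega)=\bigl[{\bf U}(\Omega){\bf W}_L(\Omega)^H\bigr]\bigl[{\bf W}_L(\Omega){\bf S}(\Omega){\bf W}_R(\Omega)^H\bigr]\bigl[{\bf V}(\Omega){\bf W}_R(\Omega)^H\bigr]^H=:\widetilde{\bf U}(\Omega){\bf C}(\Omega)\widetilde{\bf V}(\Omega)^H.$$
The middle factor ${\bf C}(\Omega)$ is then block diagonal, and its $\nu$-th block is ${\bf W}_{k_\nu}(\Omega)\,\diag(s_r(\Omega),\dots,s_{r+k_\nu-1}(\Omega))\,{\bf W}_{k_\nu}(\Omega)^H$, which is pseudo-circulant by the characterization recalled immediately after \eqref{eq:WN}, because its eigenvalues form a $k_\nu$-multiplexed family; the tail block of zeros is trivially pseudo-circulant.

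It remains to check that $\widetilde{\bf U}$, $\widetilde{\bf V}$ and ${\bf C}$ are $2\pi$-periodic, so that the factorization descends to analytic matrices in $z\in S^1$ and the outer factors become paraunitary. Reading the multiplexing relations column by column gives ${\bf U}_\nu(\Omega+2\pi)={\bf U}_\nu(\Omega)P_{k_\nu}$, and together with \eqref{eq:WN_multiplexed} the two cyclic shifts cancel out,
$$\widetilde{\bf U}_\nu(\Omega+2\pi)={\bf U}_\nu(\Omega)P_{k_\nu}\bigl[{\bf W}_{k_\nu}(\Omega)P_{k_\nu}\bigr]^H={\bf U}_\nu(\Omega){\bf W}_{k_\nu}(\Omega)^H=\widetilde{\bf U}_\nu(\Omega);$$
the analogous computation handles $\widetilde{\bf V}_\nu$ and each diagonal block of ${\bf C}$, using this time the $k_\nu$-multiplexing of the $s_i(\Omega)$. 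Paraunitarity of $\widetilde{\bf U}$, $\widetilde{\bf V}$ is inherited from their pointwise unitarity on $S^1$. I expect the only subtlety to be bookkeeping rather than conceptual: tracking the rectangular shape of ${\bf S}$ and the placement of the zero singular values, which is handled by padding ${\bf W}_L$ and ${\bf W}_R$ with identity blocks so that the remaining slab of ${\bf C}$ stays trivially block pseudo-circulant.
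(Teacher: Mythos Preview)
Your proof is correct and is essentially the paper's own argument: invoke Theorem~\ref{th:complex_SVD}, then conjugate each $k_\nu$-multiplexed orbit by ${\bf W}_{k_\nu}(\Omega)$ and use \eqref{eq:WN_multiplexed} to cancel the cyclic shift, yielding $2\pi$-periodic paraunitary factors and pseudo-circulant diagonal blocks. The paper's write-up is slightly more explicit only in the bookkeeping for singular values of multiplicity $q>1$, where it first extracts the $t$-th column from each ${\bf U}_i(\Omega)$, ${\bf V}_i(\Omega)$ so that an orbit contributes $q$ separate $k_\nu\times k_\nu$ pseudo-circulant blocks rather than a single block of size $qk_\nu$; this is exactly the kind of reshuffling you already anticipated.
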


\begin{proof}
Thanks to Theorem \ref{th:complex_SVD}, there always exists an analytic decomposition
$${\bf A}(\Omega) = \widetilde{\bf U}(\Omega)
{\bf S}(\Omega)
\overline {\bf V}(\Omega)^H,$$
where for each $k_\nu$-multiplexed nonzero singular value $s_i(\Omega)$ in $ {\bf S}(\Omega)$, the associated left singular vectors $ \widetilde {\bf U}_i(\Omega)$ and the associated right singular vectors $ \overline {\bf V}_i(\Omega)$ are all $k_\nu$-multiplexed. More importantly, there are no singular values nor singular vectors that are signed multiplexed.

If $q$ is the multiplicity of the singular value $s_1(\Omega)$, then there are $q$ identical copy of the first set of singular values, and $q$ is also the number of columns in $\overline{\bf V}_i(\Omega)$ and $\widetilde {\bf U}_i(\Omega)$ for  $i=1,\dots,k_1$. When reordering the repeated singular values into forming the $q$ identical multiplexed sets, we can always suppose that the $t$-th copy of the set correspond to the singular vectors $\overline {\bf V}_{t,1}(\Omega)$ and $\widetilde {\bf U}_{t,1}(\Omega)$ residing in the $t$-th columns of  $\overline{\bf V}_i(\Omega)$ and $\widetilde {\bf U}_i(\Omega)$. 

More specifically, If  $\overline{\bf v}_i^{t}(\Omega)$ is the $t$-th column of $\overline{\bf V}_i(\Omega)$, then $\overline{\bf V}_{t,1}(\Omega):=[\overline{\bf v}_1^t(\Omega), \dots, \overline{\bf v}^t_{k_1}(\Omega)]$ and analogously with $\widetilde{\bf u}_i^t(\Omega)$,  $\widetilde {\bf U}_i(\Omega)$ and $\widetilde {\bf U}_{t,1}(\Omega)=[\widetilde{\bf u}_1^t(\Omega), \dots, \widetilde{\bf u}_{k_1}^t(\Omega)]$. The indexes $t,1$ indicate that among the singular values in the first $k_1$-multiplexed orbit of the permutation $\tau$,  we are investigating the $t$-th copy. Do the same for $\widetilde {\bf V}_{t,1}(\Omega)$ from $\widetilde {\bf V}_{i}(\Omega)$ and notice that 
\begin{align*}
    \overline {\bf V}_{t,1}(\Omega) &=
e^{j\Omega/(k_1\kappa_1)}
\widetilde {\bf V}_{t,1}(\Omega)
\diag( \omega_{k_1\kappa_1}^i )_{i=0,\dots,k_1-1}\\
&=
e^{j\Omega/(k_1\kappa_1)}
\widetilde {\bf V}_{t,1}(\Omega)
{\bf D}_{k_1} (-2\pi/\kappa_i),
\end{align*}
where ${\bf D}(\Omega)$ is defined in \eqref{eq:WN}. 

By hypothesis, the columns of $\overline {\bf V}_{t,1}(\Omega)$ and $\widetilde {\bf U}_{t,1}(\Omega)$ are $k_1$-multiplexed for any $t$, so $\widetilde {\bf U}_{t,1}(\Omega+2\pi)= \widetilde {\bf U}_{t,1}(\Omega){\bf P}_{k_1}$ and the same with $\overline {\bf V}_{t,1}(\Omega)$ where ${\bf P}_{k_1}$ is the permutation matrix in \eqref{eq:PN}. Focusing on $t=1$, and recalling that also ${\bf W}_{k_1}(\Omega)$ is $k_1$-multiplexed, we have
\begin{align*}
\widetilde {\bf U}_{t,1}(\Omega){\bf W}_{k_1}(\Omega)^H
    &= \widetilde {\bf U}_{t,1}(\Omega){\bf P}_{k_1}{\bf P}_{k_1}^H{\bf W}_{k_1}(\Omega)^H\\
    &= \widetilde {\bf U}_{t,1}(\Omega+2\pi) {\bf W}_{k_1}(\Omega + 2\pi)^H, 
\end{align*}
and the same applies with $\overline {\bf V}_{t,1}(\Omega)$ instead of $\widetilde{\textbf{U}}_{t,1}(\Omega)$. As a consequence, we can define  the matrices $\textbf{U}_{t,1}(\Omega):= \widetilde {\bf U}_{t,1}(\Omega) {\bf W}_{k_1}(\Omega)^H$ and $\textbf{V}_{t,1}(\Omega):= \overline{\textbf{V}}_{t,1}(\Omega){\bf W}_{k_1}(\Omega)^H$ that are both analytic and $2\pi$ periodic and with orthonormal columns. 
If now ${\bf \Sigma}_1(\Omega):=\text{diag}(\sigma_i(\Omega))_{i=1,\dots,k_1}$and ${\bf \Delta}_1(z):=\text{diag}(s_i(\Omega))_{i=1,\dots,k_1}$, then
\begin{align*}
\widetilde {\bf U}_{t,1}(\Omega)& {\bf \Delta}_1(\Omega) \overline {\bf V}_{t,1}(\Omega)^H\\
&= 
   \widetilde {\bf U}_{t,1}(\Omega) {\bf W}_{k_1}(\Omega)^H
   {\bf W}_{k_1}(\Omega){\bf \Delta}_1(\Omega){\bf W}_{k_1}(\Omega)^H
   {\bf W}_{k_1}(\Omega) \overline {\bf V}_{t,1}(\Omega)^H\\
  &= 
  {\bf U}_{t,1}(\Omega)
  {\bf C}_1(\Omega)
  {\bf V}_{t,1}(\Omega)^H,
\end{align*}
where  $\textbf{C}_1(\Omega):= {\bf W}_{k_1}(\Omega){\bf \Delta}_1(\Omega){\bf W}_{k_1}(\Omega)^H$ is a $2\pi$ periodic analytic pseudo-circulant matrix relative to the first set of singular values, since ${\bf \Delta}_1(\Omega)$ has the diagonal elements that are $k_1$-multiplexed. Repeating the same reasoning for all the copies of all the orbits of $\tau$, we find a decomposition of the form  $${\bf A}(z) = \sum_{t,i}{\textbf{U}}_{t,i}(z){\bf C}_i(z)
   {\textbf{V}}_{t,i}^P(z) = {\textbf{U}}(z){\bf C}(z)
   {\textbf{V}}^P(z)$$
 with pseudo-circulant  block-diagonal matrix ${\bf C}(z)$. 
\end{proof}

In a sense, the pseudo-circulant decomposition tells us that the only causes of non-holomorphicity in the SVD is given separately by each multiplexed set of singular values, and they can be removed simultaneously from the singular vectors and the singular values by allowing a block diagonal middle matrix with specially structured blocks. Moreover, the possibly signed singular values and vectors can be transformed nonetheless into a pseudo-circulant structure.

\begin{example}
Let ${\bf A}(z)$ be as in Example \ref{ex:2}. We know that ${\bf A}(\Omega) = {\bf U}(\Omega)
{\bf S}(\Omega) {\bf V}(\Omega)^H$ with 
\begin{align*}
    {\bf U}(\Omega) &:=-\frac 12
\begin{pmatrix}
    j(e^{-j2\Omega} +e^{j\Omega/2})   & j(e^{-j2\Omega}- e^{j\Omega/2})\\
  e^{-j2\Omega} - e^{j\Omega/2}   &e^{-j2\Omega} +e^{j\Omega/2}
\end{pmatrix}
,\\
{\bf S}(\Omega)& :=
\begin{pmatrix}
   2\cos(\Omega/4)e^{j\Omega/4} & 0& 0 \\
    0& -2j\sin(\Omega/4)e^{j\Omega/4} &0 
\end{pmatrix},
\\
{\bf V}(\Omega) &:= \frac 12
\begin{pmatrix}
    \sqrt 2 & \sqrt 2 & 0\\
   e^{-j\Omega/2} & -e^{-j\Omega/2} & \sqrt 2  \\
   e^{-j\Omega/2} & - e^{-j\Omega/2} & -\sqrt 2
\end{pmatrix},
\end{align*}
where all the $s_i(\Omega)$ and the columns ${\bf U}_1(\Omega)$, ${\bf U}_2(\Omega)$, ${\bf V}_1(\Omega)$, ${\bf V}_2(\Omega)$ are $2$-multiplexed. Since the diagonal entries of  ${\bf S}(\Omega)$ are $2$-multiplexed, the base change ${\bf W}_2(\Omega)$ in \eqref{eq:WN} transforms it into 
\[
{\bf C}(z):= {\bf W}_2(\Omega) {\bf S}(\Omega) \begin{pmatrix}
    {\bf W}_2(\Omega)^H &\\&1
\end{pmatrix}
=\begin{pmatrix}
    1&1&0\\z&1&0
\end{pmatrix},
\]
that is block pseudo-circulant with the only pseudo-circulant block relative to the functions $\phi_0(z)=1$, $\phi_1(z)=z$. As a consequence, one can find the holomorphic pseudo-circulant decomposition as ${\bf A}(z) = {\bf U}'(z){\bf C}(z){\bf V}'(z)^H$ with
\begin{align*}
    {\bf U}'(z) &:=  {\bf U}(z)  {\bf W}_2(z)^P  = \frac 1{\sqrt 2}
\begin{pmatrix}
    -jz^{-2}&-j\\-z^{-2}&1
\end{pmatrix}
,\\
{\bf V}'(z) &:= {\bf V}(z) \begin{pmatrix}
    {\bf W}_2(z)^P &\\&1
\end{pmatrix} = \frac 1{\sqrt 2}
\begin{pmatrix}
   \sqrt 2& &\\
   & z^{-1}&1\\
   &z^{-1}&-1 
\end{pmatrix}.
\end{align*}
\hfill \flushright{$\triangle$}
\end{example}

{
}

\section{Conclusions}

We have studied three different decompositions for possibly rectangular matrices ${\bf A}(z)$ that are analytic at least on $S^1$. From the known SVD that is analytic with respect to $z^{1/L}$, but with possibly negative singular values, we have shown how to reduce the parameter $L$ separately for every singular value, first by halving it by removing the sign change but relaxing the singular values in the complex plane, and then by transforming the multiplexed singular subspaces into an holomorphic pseudo-circulant structure. 
This transformation shows that sign ambiguity plays no role in a possible decoupling of different multiplexed systems, no matter how they have been interlaced, akin to the eigenvalue decomposition of para-Hermitian matrices.

Future work will focus on the design of algorithms for the computation of such decompositions, in particular how to retrieve the pseudo-circulant factorization without having to compute beforehand the singular values. 
Another feature to be yet studied is the stability of all these factorizations, whose analysis will be based on the preliminary results already shown in \cite{BV22} about the stability of the eigenvalues of para-Hermitian matrices. 

\section*{\textit{Acknowledgements}}

The author thanks professor Stephan Weiss for its help and insight on the topic and the presentation of the paper. 

This work was partially supported by Alfred Kordelinin s\"a\"ati\"o Grant No. 210122
and by the European Union ERC consolidator grant, eLinoR, no 101085607.

\end{document}